\newtheorem{thm}{Theorem}[section]
\newtheorem{lem}[thm]{Lemma}
\theoremstyle{definition}
\theoremstyle{remark}
\newtheorem{rem}{Remark}[section]
\newtheorem{defn}{Definition}
\numberwithin{equation}{section}
\begin{document}

\title[Fractional Schr\"{o}dinger Equation with singularity]{Fractional Schr\"{o}dinger Equation with singular potentials of higher-order}

\author[A. Altybay]{Arshyn Altybay}
\address{
  Arshyn Altybay:
    \endgraf
    Institute of Mathematics and Mathematical Modeling
    \endgraf
    Al-Farabi Kazakh National University
    \endgraf
    Almaty, Kazakhstan
    \endgraf
    and 
    \endgraf
    Department of Mathematics: Analysis, Logic and Discrete Mathematics
    \endgraf
    Ghent University, Belgium
    \endgraf
    {\it E-mail address} {\rm arshyn.altybay@gmail.com}
    }

\author[M. Ruzhansky]{Michael Ruzhansky}
\address{
  Michael Ruzhansky:
  \endgraf
  Department of Mathematics: Analysis, Logic and Discrete Mathematics
  \endgraf
  Ghent University, Belgium
  \endgraf
  and
  \endgraf
  School of Mathematical Sciences
  \endgraf
  Queen Mary University of London
  \endgraf
  United Kingdom
  \endgraf
  {\it E-mail address} {\rm michael.ruzhansky@ugent.be}
}

\author[M. Sebih]{Mohammed Elamine Sebih}
\address{
  Mohammed Elamine Sebih:
  \endgraf
  Laboratory of Analysis and Control of Partial Differential Equations 
  \endgraf
  Djillali Liabes University 
  \endgraf
  Sidi Bel Abbes, Algeria
  \endgraf
   and  
  \endgraf
  Laboratory of Geomatics, Ecology and Environment
  \endgraf
  University Mustapha Stambouli of Mascara
  \endgraf
  Algeria
  \endgraf
  {\it E-mail address} {\rm sebihmed@gmail.com, ma.sebih@univ-mascara.dz}
}

\author[N. Tokmagambetov]{Niyaz Tokmagambetov}
\address{
  Niyaz Tokmagambetov:
  \endgraf
  Department of Mathematics: Analysis, Logic and Discrete Mathematics
  \endgraf
  Ghent University, Belgium
  \endgraf
   and
  \endgraf
  Al-Farabi Kazakh National University
  \endgraf
  Almaty, Kazakhstan
  \endgraf
  {\it E-mail address} {\rm tokmagambetov@math.kz}
 }


\keywords{Schr\"{o}dinger equation, fractional Laplacian, generalised solution, singular potential, regularisation, numerical analysis, distributional coefficient, very weak solution.}
\subjclass[2010]{35A21, 35D99, 35R11}

\begin{abstract}
In this paper the space-fractional Schr\"{o}dinger equations with  singular potentials are studied. Delta like or even higher-order singularities are allowed. By using the regularising techniques, we introduce a family of 'weakened' solutions, calling them very weak solutions. The existence, uniqueness and consistency results are proved in an appropriate sense. Numerical simulations are done, and a particles accumulating effect is observed in the singular cases. From the mathematical point of view a "splitting of the strong singularity" phenomena is also observed.
\end{abstract}

\maketitle

\section{Introduction}
In this paper we investigate the fractional Schr\"{o}dinger equation with distributional potentials. Namely, the following Cauchy problem
\begin{equation}
    \left\lbrace
    \begin{array}{l}
    iu_{t}(t,x)+(-\Delta)^{s} u(t,x) + p(x)u(t,x)=0 ,~~~(t,x)\in\left(0,T\right)\times \mathbb{R}^{d},\\
    u(0,x)=u_{0}(x), \label{Equation in introduction}
    \end{array}
    \right.
\end{equation}
is a subject to our investigation. Here $p$ is assumed to be non-negative, and $s>0$. We consider the fractional Laplacian as a spacial operator instead of the classical one and prove that the problem has a so-called "very weak solution".

While the study of the fractional Schr\"{o}dinger equation is mathematically challenging, from the physical point of view it is a natural extension of the standard Schr\"{o}dinger equation when the Brownian trajectories in Feynman path integrals are replaced by Levy flights. The fractional Schr\"{o}dinger equation is introduced by Laskin in quantum mechanics \cite{Las00}, \cite{Las02}. More recently, it is proposed as a model in optics by Longhi \cite{Lon15} and applied to laser implementation. For more general overview about the fractional Schr\"{o}dinger equation and its related topics in physics, one can see \cite{GX06,Las18}. In recent years, it has attracted a lot of interest by many authors, for instance in \cite{RB10, CHHO13, LK16, Ela19}.

On the other hand, our intention to consider singular potentials is also natural from a physical point of view. It can describe a particle which is free to move in two regions of space with a barrier between the two regions. For example, an electron can move almost freely in a conducting material, but if two conducting surfaces are put close together, the interface between them acts as a barrier for the electron. The fractional Schr\"{o}dinger equation with singular potentials has been also investigated by many authors, we cite for instance \cite{OCV10, OV11, LRSRM13, DGV18, GV19, AS19} and the references mentioned there.

As mentioned before, our aim is to prove the well-posedness of the Cauchy problem (\ref{Equation in introduction}) in the very weak sense. The appearance of the concept of very weak solutions traces back to the paper \cite{GR15}, where the authors introduced the concept for the analysis of second order hyperbolic equations with irregular coefficients in time. It was later applied in \cite{MRT19}, \cite{RT17a}, and \cite{RT17b} to study the different physical models, and some numerical analysis was done in \cite{ART19}. All these works deal with equations with time-dependent coefficients. We want here to apply it for the fractional Schr\"{o}dinger equation with space-depending coefficients. Note that in the recent paper \cite{Gar20}, the author starts to study very weak solutions for the wave equation with spatial variable coefficients. Very weak solutions for fractional Klein-Gordon equations with singular masses were considered in \cite{ARST21}.

The novelty of our work is two-fold. On the one hand, we apply the concept of very weak solutions which seems to be well adapted for numerical simulations. Indeed, we can talk about uniqueness of numerical solutions in some appropriate sense. On the other hand, the use of the theory of very weak solutions allows us to overcome the celebrated problem of the impossibility of multiplication of distributions \cite{Sch54} and, thus, to consider general coefficients: distributional or non-distributional.

In the present paper we will use the following notations :
\begin{itemize}
    \item $f\lesssim g$ means that there exists a positive constant $C$ such that $f \leq Cg$;
    \item for $s>0$, the fractional Sobolev space $H^{s}(\mathbb{R}^{d})$ is defined as:
\begin{equation*}
    H^{s}(\mathbb{R}^{d})=\big\{ u\in L^{2}(\mathbb{R}^{d}): \Vert u\Vert_{H^{s}}:=\Vert u\Vert_{L^2}+\Vert (-\Delta)^{\frac{s}{2}}u\Vert_{L^2} < +\infty \big\};
\end{equation*}
    \item for $k\in\mathbb Z_{+}$, we denote by $\Vert \cdot\Vert_{k}$ the norm defined by
    \begin{equation*}
        \Vert u(t,\cdot)\Vert_{k}:=\sum_{l=0}^{k}\Vert \partial_{t}^{l}u(t,\cdot)\Vert_{L^2} + \Vert (-\Delta)^{\frac{s}{2}}u(t,\cdot)\Vert_{L^2},
    \end{equation*}
and simply denote it by $\Vert u(t, \cdot)\Vert$, when $k=0$. We note that $\Vert u(t, \cdot)\Vert\cong\Vert u(t, \cdot)\Vert_{H^{s}}$.
\end{itemize}

\section{Main results}

In this section we introduce a family of 'very weak solutions' for the space-fractional Schr\"{o}dinger equations with distributional potentials. In particular, we are interested in singularities of delta-like or even higher-order types. First, we start by considering a classical case. Later, we show the existence, uniqueness and consistency of the very weak solutions in an appropriate sense.

Let us fix $T>0$. For a positive $s$, we consider the initial problem for the space-fractional Schr\"{o}dinger equation
\begin{equation}
    \left\lbrace
    \begin{array}{l}
    iu_{t}(t,x)+(-\Delta)^{s} u(t,x) + p(x)u(t,x)=0 ,~~~(t,x)\in\left(0,T\right)\times \mathbb{R}^{d},\\
    u(0,x)=u_{0}(x), \label{Equation}
    \end{array}
    \right.
\end{equation}
where the potential $p$ is non-negative and can be singular. 

We start by stating the following result dealing with the case of regular enough coefficient $p$.
\begin{lem} 
\label{Lem energy estimates}
Let $s>0$. Suppose that $p\in L^{\infty}(\mathbb{R}^{d})$ be non-negative and assume that $u_{0}\in H^{s}(\mathbb{R}^{d})$. Then the estimate
\begin{equation}
    \Vert u(t, \cdot)\Vert_{H^{s}} \lesssim \left(1+\Vert p\Vert_{L^{\infty}}\right)\Vert u_{0}\Vert_{H^{s}}, \label{Energy estimate}
\end{equation}
holds for the unique solution $u\in C(\left[0,T\right];H^{s})$ to the Cauchy problem (\ref{Equation}).
\end{lem}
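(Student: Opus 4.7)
The plan is to establish existence and uniqueness first, and then derive the energy estimate \eqref{Energy estimate} via a standard energy conservation argument adapted to the fractional setting.

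First, I would construct the unique solution by semigroup/functional-calculus methods. The operator $H := (-\Delta)^{s} + p(x)$, viewed on $L^{2}(\mathbb{R}^{d})$ with domain given by the form domain $H^{s}(\mathbb{R}^{d})$, is self-adjoint: $(-\Delta)^{s}$ is self-adjoint with form domain $H^{s}$, and since $p \in L^{\infty}$ with $p \geq 0$, multiplication by $p$ is a bounded non-negative perturbation, so $H$ is self-adjoint by Kato--Rellich (or just self-adjointness of sums of bounded operators in the form sense). Stone's theorem then yields the unitary group $\{e^{itH}\}_{t \in \mathbb R}$ on $L^{2}$, and $u(t) := e^{itH}u_{0}$ lies in $C([0,T]; H^{s})$ whenever $u_{0} \in H^{s}$, since the form domain of $H$ is preserved. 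Uniqueness follows from the uniqueness in Stone's theorem.

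Next I would derive two conservation laws. Unitarity of $e^{itH}$ immediately gives $\|u(t,\cdot)\|_{L^{2}} = \|u_{0}\|_{L^{2}}$ for all $t$. For the $H^{s}$ part of the norm, introduce the energy functional
\begin{equation*}
E(t) := \|(-\Delta)^{s/2}u(t,\cdot)\|_{L^{2}}^{2} + \int_{\mathbb{R}^{d}} p(x)|u(t,x)|^{2}\,dx = \langle H u(t,\cdot), u(t,\cdot)\rangle,
\end{equation*}
understood via the self-adjoint quadratic form of $H$. Because $u(t) = e^{itH}u_{0}$ and $H$ commutes with its own semigroup, the quadratic form is preserved: $E(t) \equiv E(0)$. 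One may also check this by a direct calculation, using $u_{t} = iHu$ and the self-adjointness of $(-\Delta)^{s}$ and of multiplication by $p$, so that $\dot E(t) = 2\operatorname{Re}\langle iHu, Hu\rangle = 0$; the rigorous justification for $u_{0} \in H^{s}$ proceeds by density, approximating $u_{0}$ by Schwartz functions and passing to the limit.

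Finally, I combine these ingredients. Since $p \geq 0$, one has
\begin{equation*}
\|(-\Delta)^{s/2}u(t,\cdot)\|_{L^{2}}^{2} \leq E(t) = E(0) \leq \|(-\Delta)^{s/2}u_{0}\|_{L^{2}}^{2} + \|p\|_{L^{\infty}}\|u_{0}\|_{L^{2}}^{2} \leq \bigl(1+\|p\|_{L^{\infty}}\bigr)\|u_{0}\|_{H^{s}}^{2}.
\end{equation*}
Adding the $L^{2}$ conservation law and recalling that $\|u(t,\cdot)\|_{H^{s}} = \|u(t,\cdot)\|_{L^{2}} + \|(-\Delta)^{s/2}u(t,\cdot)\|_{L^{2}}$ yields
\begin{equation*}
\|u(t,\cdot)\|_{H^{s}} \leq \|u_{0}\|_{L^{2}} + \bigl(1+\|p\|_{L^{\infty}}\bigr)^{1/2}\|u_{0}\|_{H^{s}} \lesssim \bigl(1+\|p\|_{L^{\infty}}\bigr)\|u_{0}\|_{H^{s}},
\end{equation*}
which is \eqref{Energy estimate}. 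The only mildly delicate step is the rigorous energy identity: since $(-\Delta)^{s}u$ only belongs to $H^{-s}$ when $u \in H^{s}$, pairings must be interpreted via the quadratic form of $H$ (or via approximation), but no genuine difficulty arises because $p$ is bounded, so the form is well-defined on $H^{s}$.
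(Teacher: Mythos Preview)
Your argument is correct, and the energy-conservation step for $E(t)=\|(-\Delta)^{s/2}u\|_{L^2}^2+\|p^{1/2}u\|_{L^2}^2$ coincides with the paper's (the paper derives it by multiplying the equation by $u_t$ and taking real parts, which is the direct calculation you allude to). The genuine difference lies in how the $L^2$ piece of the $H^s$ norm is handled. You invoke unitarity of $e^{itH}$ (equivalently, the $L^2$ conservation law $\|u(t)\|_{L^2}=\|u_0\|_{L^2}$) and are done in one line. The paper instead treats $f=-pu$ as a forcing term, passes to the Fourier side, applies Duhamel for the free fractional Schr\"odinger flow, and then estimates $\|pu\|_{L^2}\le\|p\|_{L^\infty}^{1/2}\|p^{1/2}u\|_{L^2}$ using the already-established bound on $\|p^{1/2}u\|_{L^2}$; this yields the slightly weaker $\|u(t)\|_{L^2}\lesssim(1+\|p\|_{L^\infty}^{1/2})^2\|u_0\|_{H^s}$. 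Your route is shorter and gives the sharper constant; amusingly, the paper does prove the $L^2$ conservation law separately (as the next lemma) but only uses it for uniqueness and consistency, not here. The paper's Duhamel approach, on the other hand, is slightly more robust in that it does not rely on the full self-adjointness of $H$ and would adapt more readily to non-self-adjoint perturbations.
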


\begin{proof}
We multiply the equation in (\ref{Equation}) by $u_t$ and by integrating, we get
\begin{equation}
    Re \left(\langle i\partial_{t}u(t,\cdot),\partial_{t}u(t,\cdot)\rangle_{L^2} + \langle(-\Delta)^{s}u(t,\cdot),\partial_{t}u(t,\cdot)\rangle_{L^2} + \langle p(\cdot)u(t,\cdot),\partial_{t}u(t,\cdot)\rangle_{L^2} \right)=0. \label{Energy functional}
\end{equation}
It is easy to see that
\begin{equation*}
    Re \langle i\partial_{t}u(t,\cdot),\partial_{t}u(t,\cdot)\rangle_{L^2}=0,
\end{equation*}
\begin{equation*}
    Re \langle p(\cdot)u(t,\cdot),\partial_{t}u(t,\cdot)\rangle_{L^2}=\frac{1}{2}\partial_{t}\Vert p^{\frac{1}{2}}(\cdot)u(t,\cdot)\Vert_{L^2}^{2},
\end{equation*}
and
\begin{equation*}
    Re \langle(-\Delta)^{s}u(t,\cdot),\partial_{t}u(t,\cdot)\rangle_{L^2}=\frac{1}{2}\partial_{t}\Vert (-\Delta)^{\frac{s}{2}}u(t,\cdot)\Vert_{L^2}^{2}.
\end{equation*}
The last equality is a consequence of the fact that $(-\Delta)^{s}$ is a self-adjoint operator. Let us denote by
\begin{equation*}
    E(t):=\Vert (-\Delta)^{\frac{s}{2}}u(t,\cdot)\Vert_{L^2}^{2} + \Vert p^{\frac{1}{2}}(\cdot)u(t,\cdot)\Vert_{L^2}^{2}.
\end{equation*}
It follows from (\ref{Energy functional}) that $\partial_{t}E(t)=0$ and thus
\begin{equation*}
    E(t)=E(0).
\end{equation*}
Therefore
\begin{equation}
    \Vert p^{\frac{1}{2}}u(t,\cdot)\Vert_{L^2}^{2} \lesssim \Vert (-\Delta)^{\frac{s}{2}}u_{0}\Vert_{L^2}^{2} + \Vert p\Vert_{L^{\infty}} \Vert u_0\Vert_{L^2}^{2} \label{Estimate pxu}
\end{equation}
and
\begin{equation*}
    \Vert (-\Delta)^{\frac{s}{2}}u(t,\cdot)\Vert_{L^2}^{2} \lesssim \Vert (-\Delta)^{\frac{s}{2}}u_{0}\Vert_{L^2}^{2} + \Vert p\Vert_{L^{\infty}} \Vert u_0\Vert_{L^2}^{2},
\end{equation*}
where we use that $\Vert p^{\frac{1}{2}} \, u_{0}\Vert_{L^2}^{2}$ can be estimated by
\begin{equation*}
    \Vert p^{\frac{1}{2}}\, u_{0} \Vert_{L^2}^{2}\leq \Vert p \, \Vert_{L^{\infty}}\Vert u_{0} \Vert_{L^2}^{2}.
\end{equation*}
Moreover, it follows that
\begin{equation}
    \Vert (-\Delta)^{\frac{s}{2}}u(t,\cdot)\Vert_{L^2} \lesssim \left(1+\Vert p\Vert_{L^{\infty}}^{\frac{1}{2}}\right)\Vert u_{0}\Vert_{H^{s}}. \label{Estimate nabla u}
\end{equation}
Let us estimate $u$. After application of the Fourier transformation in (\ref{Equation}), we get the auxiliary Cauchy problem
\begin{equation}
    i\hat{u}_{t}(t,\xi)+\vert \xi\vert^{2s}\hat{u}(t,\xi)=\hat{f}(t,\xi);~~\hat{u}(0,\xi)=\hat{u}_{0}(\xi), \label{Equation Fourier}
\end{equation}
where $\hat{u}$, $\hat{f}$ denote the Fourier transforms of $u$ and $f$ with respect to the spacial variable $x$ and $f(t,x):=-p(x)u(t,x)$. Using Duhamel's principle (see, e.g. \cite{ER18}), we get the following representation of the solution to the Cauchy problem (\ref{Equation Fourier})
\begin{equation}
    \hat{u}(t,\xi)=\hat{u}_{0}(\xi)\exp(-i\vert\xi\vert^{2s}t) + \int_{0}^{t}\exp\big(-i\vert\xi\vert^{2s}(t-s)\big)\hat{f}(s,\xi) ds. \label{Representation of sol}
\end{equation}
Taking the $L^2$ norm in (\ref{Representation of sol}) and using the fact that $\exp(-i\vert\xi\vert^{2s}t)$ is a unitary operator, we get the estimate
\begin{equation}
    \Vert \hat{u}(t,\cdot)\Vert_{L^2} \lesssim \Vert \hat{u}_{0}\Vert_{L^2} + \int_{0}^{T}\Vert \hat{f}(s,\cdot)\Vert_{L^2} ds.
\end{equation}
Using the Plancherel-Parseval formula, the estimate (\ref{Estimate pxu}) and the fact that $\Vert f(t,\cdot)\Vert_{L^2}=\Vert p(\cdot)u(t,\cdot)\Vert_{L^2}$ can be estimated by
\begin{equation*}
    \Vert p(\cdot)u(t,\cdot)\Vert_{L^{2}} \leq \Vert p \Vert_{L^{\infty}}^{\frac{1}{2}}\Vert p^{\frac{1}{2}}u(t,\cdot)\Vert_{L^{2}},
\end{equation*}
we arrive at
\begin{equation}
    \Vert u(t,\cdot)\Vert_{L^2} \lesssim \left(1+\Vert p\Vert_{L^{\infty}}^{\frac{1}{2}}\right)^{2}\Vert u_{0}\Vert_{H^{s}}. \label{Estimate u in proof}
\end{equation}
By summing (\ref{Estimate nabla u}) and (\ref{Estimate u in proof}), we get our estimate. Thus, the lemma is proved.
\end{proof}

\begin{rem}
Requiring further regularity on the initial data $u_0$, one can prove that the estimate
\begin{equation*}
    \Vert u(t,\cdot)\Vert_{k} \lesssim \left(1+\Vert p\Vert_{L^{\infty}}\right)\Vert u_{0}\Vert_{H^{s(1+2k)}},
\end{equation*}
holds for all $k\geq 0$. For this, we use the estimate (\ref{Estimate u in proof}) and proceed by induction on $k\geq 1$, on the property that, if $v_{k}:=\partial_{t}^{k}u$, where $u$ is the solution to the Cauchy problem (\ref{Equation}), solves the equation
\begin{equation*}
    i\partial_{t}v_{k}(t,x)+(-\Delta)^{s} v_{k}(t,x) + p(x)v_{k}(t,x)=0,
\end{equation*}
with initial data $v_{k}(0,x)$, then $v_{k+1}=\partial_{t}v_{k}$ solves the same equation with initial data
\begin{equation*}
    v_{k+1}(0,x)=-i(-\Delta)^{s} v_{k}(0,x)-ip(x)v_{k}(0,x).
\end{equation*}
\end{rem}

\subsection{Existence of very weak solutions}
In what follows, we consider the case when the potential $p$ is strongly singular, we have in mind  $\delta$ or $\delta^2$-functions. As mentioned above, we want to prove the existence of a very weak solution to the Cauchy problem (\ref{Equation}). We first regularise the coefficient $p$ and the data $u_0$ by convolution with a suitable mollifier $\psi$ and  obtain families of smooth functions $(p_{\varepsilon})_{\varepsilon}$ and $(u_{0,\varepsilon})_{\varepsilon}$, namely
\begin{equation*}
    p_{\varepsilon}(x)=p\ast \psi_{\varepsilon }(x) ~~~ and~~u_{0,\varepsilon}(x)=u_{0}\ast \psi_{\varepsilon }(x),
\end{equation*}
where
\begin{equation*}
    \psi_{\varepsilon }(x)=\varepsilon^{-d}\psi(x/\varepsilon),~~~\varepsilon\in\left(0,1\right],
\end{equation*}
and the function $\psi$ is a Friedrichs-mollifier, i.e. $\psi\in C_{0}^{\infty}(\mathbb{R}^{d})$, $\psi\geq 0$ and $\int\psi =1$. The above regularisation works when $p$ is at least a distribution. For more generality, we will make assumptions on the regularisations $(p_{\varepsilon})_{\varepsilon}$ and $(u_{0,\varepsilon})_{\varepsilon}$, instead of making them on $p$ and $u_{0}$. That is, we assume that there exist $N, N_{0}\in \mathbb{N}_{0}$ such that
\begin{equation}
    \Vert p_{\varepsilon}\Vert_{L^{\infty}}\leq C\varepsilon^{-N} \label{Moderateness assumption coeff}
\end{equation}
and
\begin{equation}
    \Vert u_{0,\varepsilon}\Vert_{H^s}\leq C_{0}\varepsilon^{-N_0}. \label{Moderateness assumption data}
\end{equation}
We have the following definition.

\begin{defn}[Moderateness] \label{Def:Moderetness}
\leavevmode
\begin{itemize}
    \item[(i)] We say that the net of functions $(f_{\varepsilon})_{\varepsilon}$ is ${H^{s}}$-moderate, if there exist $N\in\mathbb{N}_{0}$ and $c>0$ such that
\begin{equation*}
    \Vert f_{\varepsilon}\Vert_{H^{s}} \leq c\varepsilon^{-N}.
\end{equation*}
    \item[(ii)] We say that the net of functions $(g_{\varepsilon})_{\varepsilon}$ is $L^{{\infty}}$-moderate, if there exist $N\in\mathbb{N}_{0}$ and $c>0$ such that
\begin{equation*}
    \Vert g_{\varepsilon}\Vert_{L^{\infty}} \leq c\varepsilon^{-N}.
\end{equation*}
    \item[(iii)] We say that the net of functions $(u_{\varepsilon})_{\varepsilon}$ from $C(\left[0,T\right];H^{s})$ is $H^{s}$-moderate, if there exist $N\in\mathbb{N}_{0}$ and $c>0$ such that
\begin{equation*}
   \Vert u_{\varepsilon}(t, \cdot)\Vert_{H^{s}} \leq c\varepsilon^{-N}
\end{equation*}
for all $t\in[0,T]$.
\end{itemize}
\end{defn}

\begin{rem}
We see that $(u_{0,\varepsilon})_{\varepsilon}$ and $(p_{\varepsilon})_{\varepsilon}$ are moderate by assumptions. We also note that such assumptions are natural for distributional coefficients in the sense that regularisations of distributions are moderate. Precisely, by the structure theorems for distributions (see, e.g. \cite{FJ98}), we know that
\begin{equation}
    \text{Compactly supported distributions~} \mathcal{E}^{\prime}(\mathbb{R}^{d})\subset \big\{C^{\infty}(\mathbb{R}^{d})-\text{moderate families}\big\},\label{Strucure thm}
\end{equation}
and we see from (\ref{Strucure thm}), that a solution to a Cauchy problem may not exist in the sense of distributions, while it may exist in the set of $C^{\infty}$-moderate functions.
\end{rem}
Now, let us introduce the notion of a very weak solution to the Cauchy problem (\ref{Equation}).

\begin{defn}[Very weak solution]
The net $(u_{\varepsilon})_{\varepsilon}\in C(\left[0,T\right];H^{s})$ is said to be a very weak solution of order $s$ to the Cauchy problem (\ref{Equation}) if there exist an ${L^{\infty}}$-moderate regularisation of the coefficient $p$ and $H^s$-moderate regularisation of $u_0$ such that $(u_{\varepsilon})_{\varepsilon}$ solves the regularized problem
\begin{equation}
    \left\lbrace
    \begin{array}{l}
    i\partial_{t}u_{\varepsilon}(t,x)+(-\Delta)^{s} u_{\varepsilon}(t,x) + p_{\varepsilon}(x)u_{\varepsilon}(t,x)=0 ,~~~(t,x)\in\left[0,T\right]\times \mathbb{R}^{d},\\
    u_{\varepsilon}(0,x)=u_{0,\varepsilon}(x), \label{Regularized equation}
    \end{array}
    \right.
\end{equation}
for all $\varepsilon\in\left(0,1\right]$, and is $C$-moderate.
\end{defn}
We can now state the following theorem, the proof of which follows immediately from the definitions. In what follows we understand $p\geq0$ as its regularisations $p_{\varepsilon}$ satisfy $p_{\varepsilon}\geq0$ for all $\varepsilon\in(0, 1]$. This is clearly the case when $p$ is a distribution.

\begin{thm}[Existence] \label{Thm Existence}
Let $p\geq 0$ and $s>0$. Assume that the regularisations of the coefficient $p$ and the Cauchy data $u_{0}$ satisfy the assumptions (\ref{Moderateness assumption coeff}) and (\ref{Moderateness assumption data}). Then the Cauchy problem (\ref{Equation}) has a very weak solution.
\end{thm}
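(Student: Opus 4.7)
The plan is to apply Lemma \ref{Lem energy estimates} to the regularised Cauchy problem (\ref{Regularized equation}) for each fixed $\varepsilon\in(0,1]$, and then feed the moderateness bounds (\ref{Moderateness assumption coeff}) and (\ref{Moderateness assumption data}) into the resulting estimate to deduce that the net of solutions is itself $H^{s}$-moderate. Since the definition of a very weak solution is essentially the conjunction of "solves the regularised problem" and "is moderate", once both ingredients are in hand the theorem follows.

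First I would fix $\varepsilon\in(0,1]$ and observe that the regularisation $p_{\varepsilon}=p\ast\psi_{\varepsilon}$ is smooth, non-negative (as convolution of a non-negative distribution with a non-negative mollifier), and bounded in $L^{\infty}$ by (\ref{Moderateness assumption coeff}), while $u_{0,\varepsilon}=u_{0}\ast\psi_{\varepsilon}\in H^{s}(\mathbb{R}^{d})$ by (\ref{Moderateness assumption data}). The hypotheses of Lemma \ref{Lem energy estimates} are therefore met with $p$ replaced by $p_{\varepsilon}$ and $u_0$ replaced by $u_{0,\varepsilon}$, yielding a unique solution $u_{\varepsilon}\in C([0,T];H^{s})$ to the regularised problem (\ref{Regularized equation}) and the energy estimate
\begin{equation*}
    \Vert u_{\varepsilon}(t,\cdot)\Vert_{H^{s}} \lesssim \left(1+\Vert p_{\varepsilon}\Vert_{L^{\infty}}\right)\Vert u_{0,\varepsilon}\Vert_{H^{s}},
\end{equation*}
uniformly in $t\in[0,T]$.

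Next I would insert the moderateness bounds into this estimate. Using (\ref{Moderateness assumption coeff}) and (\ref{Moderateness assumption data}),
\begin{equation*}
    \Vert u_{\varepsilon}(t,\cdot)\Vert_{H^{s}} \lesssim \bigl(1+C\varepsilon^{-N}\bigr)\,C_{0}\,\varepsilon^{-N_{0}} \lesssim \varepsilon^{-(N+N_{0})},
\end{equation*}
for all $t\in[0,T]$ and all $\varepsilon\in(0,1]$. This shows that the net $(u_{\varepsilon})_{\varepsilon}$ is $H^{s}$-moderate in the sense of Definition \ref{Def:Moderetness}(iii), with moderateness exponent $N+N_{0}$.

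Combining these two steps, $(u_{\varepsilon})_{\varepsilon}$ solves (\ref{Regularized equation}) for every $\varepsilon$ and is $H^{s}$-moderate, hence it qualifies as a very weak solution to (\ref{Equation}). There is no substantive obstacle: the only point to be slightly careful about is that Lemma \ref{Lem energy estimates} was stated under the mere assumption $p\in L^{\infty}$, which is automatically satisfied by the smooth bounded $p_{\varepsilon}$, so the estimate is applicable $\varepsilon$ by $\varepsilon$ and the explicit dependence on $\Vert p\Vert_{L^{\infty}}$ in (\ref{Energy estimate}) is exactly what allows the moderateness of $(p_{\varepsilon})_{\varepsilon}$ to be transferred to $(u_{\varepsilon})_{\varepsilon}$.
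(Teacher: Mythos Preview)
Your proof is correct and follows essentially the same route as the paper: apply the energy estimate of Lemma~\ref{Lem energy estimates} to the regularised problem (\ref{Regularized equation}) for each fixed $\varepsilon$, then feed in the moderateness assumptions (\ref{Moderateness assumption coeff}) and (\ref{Moderateness assumption data}) to obtain $\Vert u_{\varepsilon}(t,\cdot)\Vert_{H^{s}}\lesssim\varepsilon^{-(N+N_{0})}$. Your write-up is in fact slightly more careful than the paper's in checking that the hypotheses of the lemma are met by $p_{\varepsilon}$ and $u_{0,\varepsilon}$.
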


\begin{proof}
The coefficient $p$ and the data $u_0$ are moderate by assumptions. To prove that a very weak solution exists, we need to prove that the net $(u_{\varepsilon})_{\varepsilon}$, solution to the family of regularized Cauchy problems
\begin{equation*}
    \left\lbrace
    \begin{array}{l}
    i\partial_{t}u_{\varepsilon}(t,x)+(-\Delta)^{s} u_{\varepsilon}(t,x) + p_{\varepsilon}(x)u_{\varepsilon}(t,x)=0 ,~~~(t,x)\in\left[0,T\right]\times \mathbb{R}^{d},\\
    u_{\varepsilon}(0,x)=u_{0,\varepsilon}(x),
    \end{array}
    \right.
\end{equation*}
is $C$-moderate. Indeed, using the assumptions (\ref{Moderateness assumption coeff}), (\ref{Moderateness assumption data}) and the energy estimate (\ref{Energy estimate}), we arrive at
\begin{equation*}
    \Vert u(t,\cdot)\Vert \lesssim \varepsilon^{-N_{0}-N}.
\end{equation*}
The net $(u_{\varepsilon})_{\varepsilon}$ is then $C$-moderate and the existence of a very weak solution is proved.
\end{proof}

In order to prove uniqueness and consistency of the very weak solution in the forthcoming theorems, we need the following lemma.

\begin{lem}\label{Lem energy estimate 2}
Let $u_{0}\in H^{s}(\mathbb{R}^{d})$ and assume that $p\in L^{\infty}(\mathbb{R}^d)$ is non-negative. Then, the energy conservation
\begin{equation}
    \Vert u(t,\cdot)\Vert_{L^2} = \Vert u_{0}\Vert_{L^2}, \label{Energy estimate 2}
\end{equation}
holds for all $t\in [0,T]$, for the unique solution $u\in C(\left[0,T\right];H^{s})$ to the Cauchy problem (\ref{Equation}).
\end{lem}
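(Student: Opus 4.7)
The plan is to mimic the standard conservation-of-mass argument for Schrödinger-type equations, this time pairing the equation with $u$ itself (rather than $u_t$ as in Lemma \ref{Lem energy estimates}) and extracting the imaginary part. Since $(-\Delta)^s$ is self-adjoint and $p$ is real-valued and non-negative, the two potential-type terms will drop out, leaving a tautological identity for $\partial_t \|u(t,\cdot)\|_{L^2}^2$.

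More concretely, I would take the $L^2$ inner product of the equation in (\ref{Equation}) with $u(t,\cdot)$ to obtain
\begin{equation*}
i\langle \partial_t u(t,\cdot), u(t,\cdot)\rangle_{L^2} + \langle (-\Delta)^{s} u(t,\cdot), u(t,\cdot)\rangle_{L^2} + \langle p(\cdot)u(t,\cdot), u(t,\cdot)\rangle_{L^2} = 0.
\end{equation*}
The self-adjointness of $(-\Delta)^s$ (using $u\in H^s$) gives $\langle (-\Delta)^{s} u, u\rangle_{L^2}=\|(-\Delta)^{s/2}u\|_{L^2}^{2}\in\mathbb{R}$, and because $p$ is real-valued and non-negative, $\langle pu, u\rangle_{L^2}=\|p^{1/2}u\|_{L^2}^{2}\in\mathbb{R}$. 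Taking imaginary parts therefore kills both spatial terms and yields
\begin{equation*}
\mathrm{Re}\,\langle \partial_t u(t,\cdot), u(t,\cdot)\rangle_{L^2} = \tfrac{1}{2}\partial_t \Vert u(t,\cdot)\Vert_{L^2}^{2} = 0,
\end{equation*}
from which (\ref{Energy estimate 2}) follows after integrating from $0$ to $t$.

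The only thing that requires a little care is the justification that the pairing above makes sense and that the time differentiation can be pulled outside: one needs $u\in C([0,T];H^s)$ (given) to ensure $(-\Delta)^{s/2}u\in C([0,T];L^2)$ and $pu\in C([0,T];L^2)$ (using $p\in L^\infty$), and some regularity of $\partial_t u$ in $L^2$ to make $\langle \partial_t u, u\rangle_{L^2}$ meaningful. The latter follows either from the equation itself, since $\partial_t u = i(-\Delta)^s u + ipu$ lies in $C([0,T];H^{-s})$, so the pairing should be interpreted as the duality between $H^{-s}$ and $H^s$; alternatively, one may argue by regularising the initial data, proving the identity for smooth solutions, and passing to the limit using Lemma \ref{Lem energy estimates}. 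I do not expect any real obstacle here — this is the classical mass-conservation law and the only subtlety is the standard distributional bookkeeping that lets us write $\tfrac{1}{2}\partial_t\|u\|_{L^2}^2 = \mathrm{Re}\langle \partial_t u, u\rangle_{L^2}$ rigorously.
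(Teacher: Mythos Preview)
Your proof is correct and essentially identical to the paper's: the paper first multiplies the equation by $-i$ and then takes the real part after pairing with $u$, which is exactly the same as your step of pairing with $u$ and taking the imaginary part. Your added remarks on the duality/regularisation needed to justify the pairing are more careful than the paper, which simply performs the formal computation.
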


\begin{proof}
We first multiply the equation in (\ref{Equation}) by $-i$, we obtain
\begin{equation*}
    u_{t}(t,x)-i(-\Delta)^{s} u(t,x) - ip(x)u(t,x)=0.
\end{equation*}
Multiplying the last equation by $u$, integrating over $\mathbb{R}^{d}$ and taking the real part, we get
\begin{equation*}
    Re \left(\langle u_{t}(t,\cdot),u(t,\cdot)\rangle_{L^2} -i \langle (-\Delta)^{s} u(t,\cdot),u(t,\cdot)\rangle_{L^2} -i \langle p(\cdot)u(t,\cdot),u(t,\cdot)\rangle_{L^2} \right)=0.
\end{equation*}
Using similar arguments as in Lemma \ref{Lem energy estimates}, it is easy to see that
\begin{equation*}
    Re \langle u_{t}(t,\cdot),u(t,\cdot)\rangle_{L^2} = \frac{1}{2}\partial_{t} \Vert u(t,\cdot)\Vert_{L^2}^{2}
\end{equation*}
and that
\begin{equation*}
    Re \left(-i \langle (-\Delta)^{s} u(t,\cdot),u(t,\cdot)\rangle_{L^2}\right) = Re \left(-i \langle p(\cdot)u(t,\cdot),u(t,\cdot)\rangle_{L^2}\right) = 0.
\end{equation*}
Thus, we have the energy conservation law, i.e. $\Vert u(t,\cdot)\Vert_{L^2}$ is constant for all $t\in [0,T]$ and the statement is proved.
\end{proof}

\subsection{Uniqueness}
We prove the uniqueness of a very weak solution to the Cauchy problem (\ref{Equation}) in the sense of the following definition.

\begin{defn}[Uniqueness]
We say that the Cauchy problem (\ref{Equation}) has a unique very weak solution, if for all families of regularisations $(p_{\varepsilon})_{\varepsilon}$, $(\Tilde{p}_{\varepsilon})_{\varepsilon}$, $(u_{0,\varepsilon})_{\varepsilon}$ and $(\Tilde{u}_{0,\varepsilon})_{\varepsilon}$ of the coefficient $p$ and the Cauchy data $u_0$, satisfying
\begin{equation*}
    \Vert p_{\varepsilon}-\Tilde{p}_{\varepsilon}\Vert_{L^{\infty}}\leq C_{k}\varepsilon^{k} \text{~~for all~~} k>0
\end{equation*}
and
\begin{equation*}
    \Vert u_{0,\varepsilon}-\Tilde{u}_{0,\varepsilon}\Vert_{L^{2}}\leq C_{l}\varepsilon^{l} \text{~~for all~~} l>0,
\end{equation*}
we have
\begin{equation*}
    \Vert u_{\varepsilon}(t,\cdot)-\Tilde{u}_{\varepsilon}(t,\cdot)\Vert_{L^{2}} \leq C_{N}\varepsilon^{N}
\end{equation*}
for all $N>0$,  
where $(u_{\varepsilon})_{\varepsilon}$ and $(\Tilde{u}_{\varepsilon})_{\varepsilon}$ are the families of solutions to the corresponding regularized Cauchy problems.
\end{defn}

\begin{thm}[Uniqueness] \label{Thm uniqueness}
Let $p\geq 0$ and $u_{0}\in H^{s}(\mathbb{R}^{d})$ and assume that they satisfy the assumptions (\ref{Moderateness assumption coeff}) and (\ref{Moderateness assumption data}). Then, the Cauchy problem (\ref{Equation}) has a unique very weak solution.
\end{thm}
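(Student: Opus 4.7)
The plan is to compare two very weak solutions $(u_\varepsilon)_\varepsilon$ and $(\tilde u_\varepsilon)_\varepsilon$ by setting $w_\varepsilon := u_\varepsilon - \tilde u_\varepsilon$ and subtracting the two regularised Cauchy problems. This yields an inhomogeneous fractional Schr\"odinger equation for $w_\varepsilon$ of the form
\begin{equation*}
    i\partial_t w_\varepsilon(t,x) + (-\Delta)^s w_\varepsilon(t,x) + p_\varepsilon(x) w_\varepsilon(t,x) = \bigl(\tilde p_\varepsilon(x) - p_\varepsilon(x)\bigr)\tilde u_\varepsilon(t,x),
\end{equation*}
with initial data $w_\varepsilon(0,x) = u_{0,\varepsilon}(x) - \tilde u_{0,\varepsilon}(x)$. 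The right-hand side $f_\varepsilon := (\tilde p_\varepsilon - p_\varepsilon)\tilde u_\varepsilon$ and the initial data are both negligible by hypothesis, so the task reduces to showing that the map ``initial data and forcing'' $\mapsto$ ``solution'' is continuous in $L^2$ with a loss controlled by negative powers of $\varepsilon$.

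The key step will be an $L^2$ energy estimate for $w_\varepsilon$ in the presence of a source term. Taking the $L^2$ pairing of the equation with $w_\varepsilon$, writing $\partial_t w_\varepsilon = -i(-\Delta)^s w_\varepsilon - i p_\varepsilon w_\varepsilon + i f_\varepsilon$, and using that $(-\Delta)^s$ is self-adjoint and $p_\varepsilon$ is real-valued non-negative (so $\langle (-\Delta)^s w_\varepsilon, w_\varepsilon\rangle_{L^2}$ and $\langle p_\varepsilon w_\varepsilon, w_\varepsilon\rangle_{L^2}$ are real), the argument from Lemma \ref{Lem energy estimate 2} extends to give
\begin{equation*}
    \partial_t \Vert w_\varepsilon(t,\cdot)\Vert_{L^2}^2 = -2\,\mathrm{Im}\langle f_\varepsilon(t,\cdot), w_\varepsilon(t,\cdot)\rangle_{L^2} \leq 2\Vert f_\varepsilon(t,\cdot)\Vert_{L^2} \Vert w_\varepsilon(t,\cdot)\Vert_{L^2},
\end{equation*}
and consequently
\begin{equation*}
    \Vert w_\varepsilon(t,\cdot)\Vert_{L^2} \leq \Vert u_{0,\varepsilon} - \tilde u_{0,\varepsilon}\Vert_{L^2} + \int_0^T \Vert f_\varepsilon(\tau,\cdot)\Vert_{L^2}\, d\tau,
\end{equation*}
for $t\in[0,T]$. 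Alternatively, one could derive the same inequality by applying the Duhamel representation (\ref{Representation of sol}) and the Plancherel identity.

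It then remains to bound $\Vert f_\varepsilon(\tau,\cdot)\Vert_{L^2} \leq \Vert \tilde p_\varepsilon - p_\varepsilon\Vert_{L^\infty}\Vert \tilde u_\varepsilon(\tau,\cdot)\Vert_{L^2}$. Here I would invoke Lemma \ref{Lem energy estimate 2} applied to $\tilde u_\varepsilon$, giving $\Vert \tilde u_\varepsilon(\tau,\cdot)\Vert_{L^2} = \Vert \tilde u_{0,\varepsilon}\Vert_{L^2} \lesssim \varepsilon^{-N_0}$ by the moderateness assumption (\ref{Moderateness assumption data}), while the negligibility of $\tilde p_\varepsilon - p_\varepsilon$ yields $\Vert \tilde p_\varepsilon - p_\varepsilon\Vert_{L^\infty} \leq C_k \varepsilon^k$ for every $k>0$. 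Combined with the negligibility $\Vert u_{0,\varepsilon} - \tilde u_{0,\varepsilon}\Vert_{L^2} \leq C_l \varepsilon^l$, this gives
\begin{equation*}
    \Vert w_\varepsilon(t,\cdot)\Vert_{L^2} \leq C_l \varepsilon^l + T\,C_k\,c\,\varepsilon^{k-N_0}.
\end{equation*}
For any prescribed $N>0$, choosing $l = N$ and $k = N + N_0$ delivers $\Vert w_\varepsilon(t,\cdot)\Vert_{L^2} \leq C_N \varepsilon^N$, as required.

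The main subtle point is not the algebra but keeping track of what moderateness buys us: the forcing term $f_\varepsilon$ loses only the polynomial factor $\varepsilon^{-N_0}$ coming from $\tilde u_\varepsilon$, and this loss is absorbed by the arbitrary-order smallness of $\tilde p_\varepsilon - p_\varepsilon$. I expect the most delicate bookkeeping step to be the use of the $L^2$-conservation of Lemma \ref{Lem energy estimate 2} applied at the regularised level to $\tilde u_\varepsilon$ (rather than the $H^s$-estimate of Lemma \ref{Lem energy estimates}, which would only give a moderate bound and force a loss of regularity in the hypotheses on $u_0$).
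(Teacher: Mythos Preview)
Your proposal is correct and follows essentially the same approach as the paper: subtract the regularised problems to obtain an inhomogeneous equation for $w_\varepsilon = u_\varepsilon - \tilde u_\varepsilon$ with forcing $f_\varepsilon = (\tilde p_\varepsilon - p_\varepsilon)\tilde u_\varepsilon$, derive the $L^2$ estimate $\Vert w_\varepsilon(t,\cdot)\Vert_{L^2} \leq \Vert u_{0,\varepsilon}-\tilde u_{0,\varepsilon}\Vert_{L^2} + \int_0^T \Vert f_\varepsilon(\tau,\cdot)\Vert_{L^2}\,d\tau$, and absorb the moderate bound on $\tilde u_\varepsilon$ into the negligibility of $\tilde p_\varepsilon - p_\varepsilon$. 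The only cosmetic difference is that the paper obtains this inhomogeneous estimate via the Duhamel route you mention as an alternative (splitting into two homogeneous auxiliary problems and applying Lemma~\ref{Lem energy estimate 2} to each), rather than by your direct energy-differentiation argument.
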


\begin{proof}
Let $(p_{\varepsilon})_{\varepsilon}$, $(\Tilde{p}_{\varepsilon})_{\varepsilon}$ and $(u_{0,\varepsilon})_{\varepsilon}$, $(\Tilde{u}_{0,\varepsilon})_{\varepsilon}$, regularisations of $p$ and $u_0$, satisfying
\begin{equation*}
    \Vert p_{\varepsilon}-\Tilde{p}_{\varepsilon}\Vert_{L^{\infty}}\leq C_{k}\varepsilon^{k} \text{~~for all~~} k>0
\end{equation*}
and
\begin{equation*}
    \Vert u_{0,\varepsilon}-\Tilde{u}_{0,\varepsilon}\Vert_{L^{2}}\leq C_{l}\varepsilon^{l} \text{~~for all~~} l>0,
\end{equation*}
and let us denote by $U_{\varepsilon}(t,x):=u_{\varepsilon}(t,x)-\Tilde{u}_{\varepsilon}(t,x)$, where $(u_{\varepsilon})_{\varepsilon}$ and $(\Tilde{u}_{\varepsilon})_{\varepsilon}$ are the families of solutions to the regularized Cauchy problems, corresponding to the families $\left(p_{\varepsilon}, u_{\varepsilon}\right)_{\varepsilon}$ and $\left(\Tilde{p}_{\varepsilon}, \Tilde{u}_{0,\varepsilon}\right)_{\varepsilon}$. Then, $U_{\varepsilon}$ solves the Cauchy problem
\begin{equation}
    \left\lbrace
    \begin{array}{l}
    i\partial_{t}U_{\varepsilon}(t,x)+(-\Delta)^{s} U_{\varepsilon}(t,x) + p_{\varepsilon}(x)U_{\varepsilon}(t,x)=f_{\varepsilon}(t,x) ,~~~(t,x)\in\left(0,T\right)\times \mathbb{R}^{d},\\
    U_{\varepsilon}(0,x)=(u_{0,\varepsilon}-\Tilde{u}_{0,\varepsilon})(x), \label{Cauchy pb U_eps}
    \end{array}
    \right.
\end{equation}
where
\begin{equation*}
    f_{\varepsilon}(t,x)=\left(\Tilde{p}_{\varepsilon}(x)-p_{\varepsilon}(x)\right)\Tilde{u}_{\varepsilon}(t,x).
\end{equation*}
Let $(V_{\varepsilon})_{\varepsilon}$ and $(W_{\varepsilon})_{\varepsilon}$, the families of solutions to the auxiliary Cauchy problems
\begin{equation*}
    \left\lbrace
    \begin{array}{l}
    i\partial_{t}V_{\varepsilon}(x,t;s) + (-\Delta)^{s}V_{\varepsilon}(x,t;s) + p_{\varepsilon}(x)V_{\varepsilon}(x,t;s)=0,\\
    V_{\varepsilon}(x,s;s)=f_{\varepsilon}(s,x),
    \end{array}
    \right.
\end{equation*}
and
\begin{equation*}
    \left\lbrace
    \begin{array}{l}
    i\partial_{t}W_{\varepsilon}(t, x) + (-\Delta)^{s}W_{\varepsilon}(t, x) + p_{\varepsilon}(x)W_{\varepsilon}(t, x)=0,\\
    W_{\varepsilon}(0, x)=(u_{0,\varepsilon}-\Tilde{u}_{0,\varepsilon})(x).
    \end{array}
    \right.
\end{equation*}
Using Duhamel's principle, $U_{\varepsilon}$ is given by
\begin{equation}
    U_{\varepsilon}(t, x)=W_{\varepsilon}(t, x) + \int_{0}^{t}V_{\varepsilon}(x,t-s;s)ds. \label{Representation U_eps}
\end{equation}
Taking the $L^2$ norm in (\ref{Representation U_eps}) and using (\ref{Energy estimate 2}) to estimate $V_{\varepsilon}$ and $W_{\varepsilon}$, we get
\begin{align*}
    \Vert U_{\varepsilon}(t, \cdot)\Vert_{L^2} & \leq \Vert W_{\varepsilon}(t, \cdot)\Vert_{L^2} + \int_{0}^{T}\Vert V_{\varepsilon}(\cdot,t-s;s)\Vert_{L^2} ds\\
    & \lesssim \Vert u_{0,\varepsilon}-\Tilde{u}_{0,\varepsilon}\Vert_{L^2} + \int_{0}^{T}\Vert f_{\varepsilon}(s,\cdot)\Vert_{L^2} ds\\
    & \lesssim \Vert u_{0,\varepsilon}-\Tilde{u}_{0,\varepsilon}\Vert_{L^2} + \Vert \Tilde{p}_{\varepsilon}-p_{\varepsilon}\Vert_{L^{\infty}}\int_{0}^{T}\Vert \Tilde{u}_{\varepsilon}(s,\cdot)\Vert_{L^2} ds.
\end{align*}
From the one hand, we have that $\Vert u_{0,\varepsilon}-\Tilde{u}_{0,\varepsilon}\Vert_{L^{2}}\leq C_{l}\varepsilon^{l}$, for all $l>0$. On the other hand, $(u_{\varepsilon})_{\varepsilon}$ as a very weak solution to the Cauchy problem (\ref{Equation}) is moderate and $\Vert \Tilde{p}_{\varepsilon}-p_{\varepsilon}\Vert_{L^{\infty}} \leq C_{k}\varepsilon^{k}$ for all $k>0$. Therefore,
\begin{equation*}
    \Vert U_{\varepsilon}(t, \cdot)\Vert_{L^2}=\Vert u_{\varepsilon}(t,\cdot)-\Tilde{u}_{\varepsilon}(t,\cdot)\Vert_{L^2} \lesssim \varepsilon^{N},
\end{equation*}
for all $N>0$, which means that the very weak solution is unique.
\end{proof}

\subsection{Consistency}
Now we give the consistency result, which means that the very weak solution to the Cauchy problem (\ref{Equation}) converges in an appropriate norm, to the classical solution, when the latter exists.
\begin{thm}[Consistency]
Let $p\in L^{\infty}(\mathbb{R}^{d})$ be non-negative. Assume that $u_{0}\in H^{s}(\mathbb{R}^{d})$ for $s>0$, and let us consider the Cauchy problem
\begin{equation}
    \left\lbrace
    \begin{array}{l}
    iu_{t}(t,x)+(-\Delta)^{s} u(t,x) + p(x)u(t,x)=0 ,~~~(t,x)\in\left(0,T\right)\times \mathbb{R}^{d},\\
    u(0,x)=u_{0}(x). \label{Equation consistency}
    \end{array}
    \right.
\end{equation}
Let $(u_{\varepsilon})_{\varepsilon}$ be a very weak solution of (\ref{Equation consistency}). Then for any regularising families of the coefficient $p$ and the Cauchy data $u_0$, the net $(u_{\varepsilon})_{\varepsilon}$ converges in $L^{2}$ as $\varepsilon \rightarrow 0$ to the unique classical solution of the Cauchy problem (\ref{Equation consistency}).
\end{thm}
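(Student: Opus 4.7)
The plan is to compare the very weak solution $(u_{\varepsilon})_{\varepsilon}$ with the unique classical solution $u\in C([0,T];H^{s})$ to \eqref{Equation consistency}, whose existence is guaranteed by Lemma~\ref{Lem energy estimates}. Setting $U_{\varepsilon}(t,x):=u_{\varepsilon}(t,x)-u(t,x)$ and subtracting the two equations, one sees that $U_{\varepsilon}$ satisfies the inhomogeneous Cauchy problem
\begin{equation*}
\left\lbrace
\begin{array}{l}
i\partial_{t}U_{\varepsilon}(t,x)+(-\Delta)^{s}U_{\varepsilon}(t,x)+p_{\varepsilon}(x)U_{\varepsilon}(t,x)=\bigl(p(x)-p_{\varepsilon}(x)\bigr)u(t,x),\\
U_{\varepsilon}(0,x)=u_{0,\varepsilon}(x)-u_{0}(x).
\end{array}
\right.
\end{equation*}
This is structurally identical to the system \eqref{Cauchy pb U_eps} that appears in the uniqueness proof, with source $f_{\varepsilon}(t,x)=(p(x)-p_{\varepsilon}(x))u(t,x)$ and initial displacement $u_{0,\varepsilon}-u_{0}$.

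Next, I would reuse the Duhamel representation employed in the uniqueness argument, splitting $U_{\varepsilon}=W_{\varepsilon}+\int_{0}^{t}V_{\varepsilon}(\cdot,t-s;s)\,ds$, where $W_{\varepsilon}$ and $V_{\varepsilon}$ solve the homogeneous regularised problem with the respective initial data $u_{0,\varepsilon}-u_{0}$ and $f_{\varepsilon}(s,\cdot)$. Applying the $L^{2}$-energy conservation of Lemma~\ref{Lem energy estimate 2} to each piece (this is the key input, since it is valid uniformly in $\varepsilon$ as it only uses non-negativity of $p_{\varepsilon}$, which we may assume), one obtains the a priori bound
\begin{equation*}
\Vert U_{\varepsilon}(t,\cdot)\Vert_{L^{2}}\lesssim \Vert u_{0,\varepsilon}-u_{0}\Vert_{L^{2}}+\int_{0}^{T}\Vert(p-p_{\varepsilon})u(s,\cdot)\Vert_{L^{2}}\,ds.
\end{equation*}

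It remains to show that both terms on the right-hand side tend to $0$ as $\varepsilon\to 0$. The first is immediate from the defining property of the regularisation $u_{0,\varepsilon}=u_{0}\ast\psi_{\varepsilon}\to u_{0}$ in $L^{2}$. For the second, I would combine two ingredients: (i) the mollification satisfies $\Vert p_{\varepsilon}\Vert_{L^{\infty}}\leq \Vert p\Vert_{L^{\infty}}$ and $p_{\varepsilon}\to p$ pointwise a.e. (at Lebesgue points of $p$), and (ii) $u(s,\cdot)\in L^{2}(\mathbb{R}^{d})$ with $\Vert u(s,\cdot)\Vert_{L^{2}}$ uniformly bounded on $[0,T]$ by Lemma~\ref{Lem energy estimate 2}. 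Writing $|(p-p_{\varepsilon})u(s,\cdot)|^{2}\leq 4\Vert p\Vert_{L^{\infty}}^{2}|u(s,\cdot)|^{2}\in L^{1}(\mathbb{R}^{d})$, Lebesgue dominated convergence yields $\Vert(p-p_{\varepsilon})u(s,\cdot)\Vert_{L^{2}}\to 0$ for each $s\in[0,T]$; a second application of dominated convergence in the time variable, with majorant $2\Vert p\Vert_{L^{\infty}}\Vert u(s,\cdot)\Vert_{L^{2}}$, delivers the convergence of the integral.

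The main obstacle is the second limit: the very weak solution framework allows rather general regularisations, and convergence $p_{\varepsilon}\to p$ is not guaranteed in $L^{\infty}$, only pointwise or in $L^{q}_{\mathrm{loc}}$ for $q<\infty$. One must therefore avoid taking $\Vert p-p_{\varepsilon}\Vert_{L^{\infty}}$ out of the integral (as was done in the uniqueness proof, where the difference is between two regularisations of the same object and decays arbitrarily fast), and instead exploit the presence of the fixed $L^{2}$-function $u(s,\cdot)$ to invoke dominated convergence. Once this is handled, consistency follows at once.
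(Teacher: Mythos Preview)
Your proposal follows the same overall strategy as the paper: form the difference $U_{\varepsilon}=u_{\varepsilon}-u$ (the paper takes the opposite sign, $W_{\varepsilon}=u-u_{\varepsilon}$), derive the inhomogeneous regularised equation with source $(p-p_{\varepsilon})u$, apply Duhamel and the $L^{2}$-conservation of Lemma~\ref{Lem energy estimate 2} exactly as in the uniqueness proof, and let $\varepsilon\to0$.

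The one substantive difference is in the final convergence step. The paper pulls $\Vert p_{\varepsilon}-p\Vert_{L^{\infty}}$ out of the integral and asserts that it tends to zero; you instead keep the factor $u(s,\cdot)$ inside the $L^{2}$-norm and argue via dominated convergence, using pointwise a.e.\ convergence of $p_{\varepsilon}$ to $p$ together with the uniform bound $\Vert p_{\varepsilon}\Vert_{L^{\infty}}\leq\Vert p\Vert_{L^{\infty}}$. Your version is more robust: convergence of mollifications in $L^{\infty}$ genuinely requires extra regularity of $p$ (uniform continuity, say), which is not part of the hypotheses, whereas your dominated-convergence argument works for arbitrary $p\in L^{\infty}(\mathbb{R}^{d})$. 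The trade-off is that the paper's line is shorter, while yours actually justifies the limit under the stated assumptions.
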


\begin{proof}
Let $u$ be the classical solution to
\begin{equation*}
    \left\lbrace
    \begin{array}{l}
    iu_{t}(t,x)+(-\Delta)^{s} u(t,x) + p(x)u(t,x)=0 ,~~~(t,x)\in\left(0,T\right)\times \mathbb{R}^{d},\\
    u(0,x)=u_{0}(x),
    \end{array}
    \right.
\end{equation*}
and let $(u_{\varepsilon})_{\varepsilon}$ its very weak solution. It satisfies
\begin{equation*}
    \left\lbrace
    \begin{array}{l}
    i\partial u_{\varepsilon}(t,x)+(-\Delta)^{s} u_{\varepsilon}(t,x) + p_{\varepsilon}(x)u_{\varepsilon}(t,x)=0 ,~~~(t,x)\in\left(0,T\right)\times \mathbb{R}^{d},\\
    u_{\varepsilon}(0,x)=u_{0,\varepsilon}(x).
    \end{array}
    \right.
\end{equation*}
Let us denote by $W_{\varepsilon}(t,x):=u(t,x)-u_{\varepsilon}(t,x)$. It solves
\begin{equation*}
    \left\lbrace
    \begin{array}{l}
    i\partial W_{\varepsilon}(t,x)+(-\Delta)^{s} W_{\varepsilon}(t,x) + p_{\varepsilon}(x)W_{\varepsilon}(t,x)=\eta_{\varepsilon}(t,x) ,~~~(t,x)\in\left(0,T\right)\times \mathbb{R}^{d},\\
    W_{\varepsilon}(0,x)=(u_{0}-u_{0,\varepsilon})(x),
    \end{array}
    \right.
\end{equation*}
where $\eta_{\varepsilon}(t,x):=(p_{\varepsilon}(x)-p(x))u(t,x)$. Using Duhamel's principle and similar arguments as in Theorem (\ref{Thm uniqueness}), we get the estimate
\begin{align*}
    \Vert W_{\varepsilon}(t, \cdot)\Vert_{L^2} & \lesssim \Vert u_{0}-u_{0,\varepsilon}\Vert_{L^2} + \int_{0}^{T}\Vert \eta_{\varepsilon}(s,\cdot)\Vert_{L^2} ds\\
    & \lesssim \Vert u_{0}-u_{0,\varepsilon}\Vert_{L^2} + \Vert p_{\varepsilon} - p\Vert_{L^{\infty}}\int_{0}^{T}\Vert u(s,\cdot)\Vert_{L^2} ds.
\end{align*}
When $\varepsilon\rightarrow 0$, the right hand side of the last inequality tends to $0$, since $\Vert p_{\varepsilon}-p\Vert_{L^{\infty}}\rightarrow 0$ and $\Vert u_{0}-u_{0,\varepsilon}\Vert_{L^2}\rightarrow 0$. Hence, the very weak solution converges to the classical one in $L^2$ as $\varepsilon\to0$.
\end{proof}

\begin{figure}[ht!]
\begin{minipage}[h]{0.31\linewidth}
\center{\includegraphics[scale=0.25]{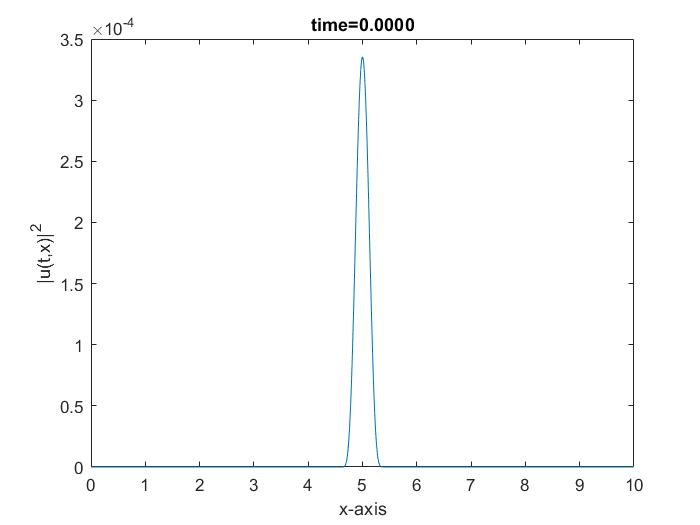}}
\end{minipage}
\hfill
\begin{minipage}[h]{0.31\linewidth}
\center{\includegraphics[scale=0.25]{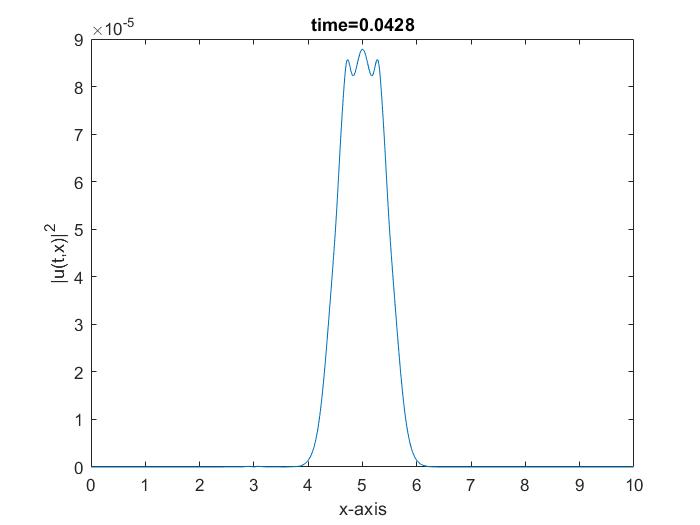}}
\end{minipage}
\hfill
\begin{minipage}[h]{0.31\linewidth}
\center{\includegraphics[scale=0.25]{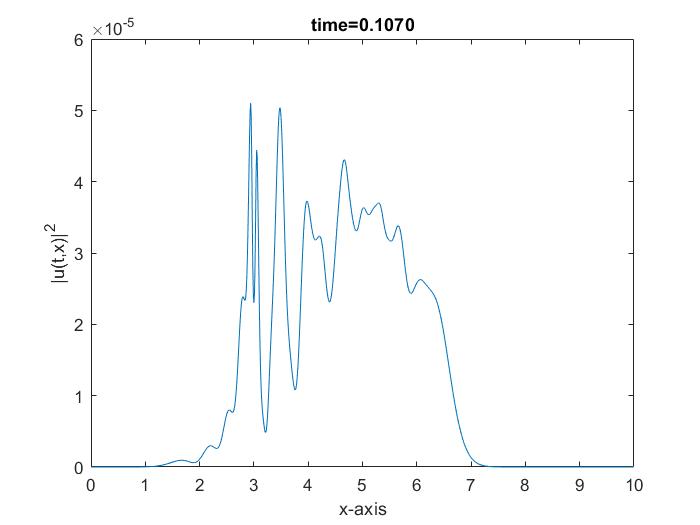}}
\end{minipage}
\hfill
\begin{minipage}[h]{0.31\linewidth}
\center{\includegraphics[scale=0.25]{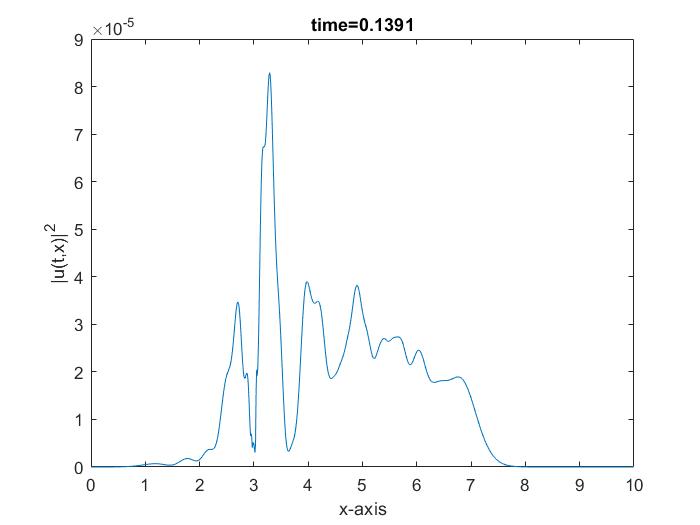}}
\end{minipage}
\hfill
\begin{minipage}[h]{0.31\linewidth}
\center{\includegraphics[scale=0.25]{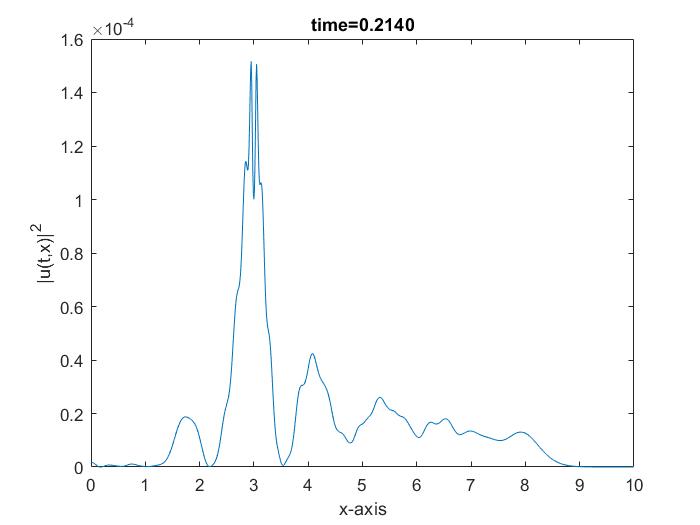}}
\end{minipage}
\hfill
\begin{minipage}[h]{0.31\linewidth}
\center{\includegraphics[scale=0.25]{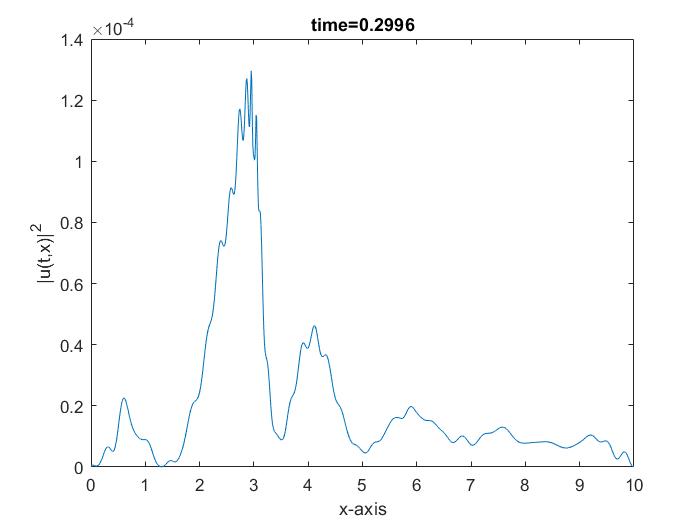}}
\end{minipage}
\caption{In these plots, we analyse behaviour of the solution of the Schr\"{o}dinger equation \eqref{RE-01} with a $\delta$-like potential. In the top left plot, the graphic of the position density of particles at the initial time is given. In the further plots, we draw the position density function $|u|^{2}$ at $t=0.0428, 0.1070, 0.1391, 0.2140, 0.2996$ for $\varepsilon=0.05$. Here, a $\delta$-like function with the support at point $3$ is considered.} 
\label{fig1}
\end{figure}

\begin{figure}[ht!]
\begin{minipage}[h]{0.31\linewidth}
\center{\includegraphics[scale=0.25]{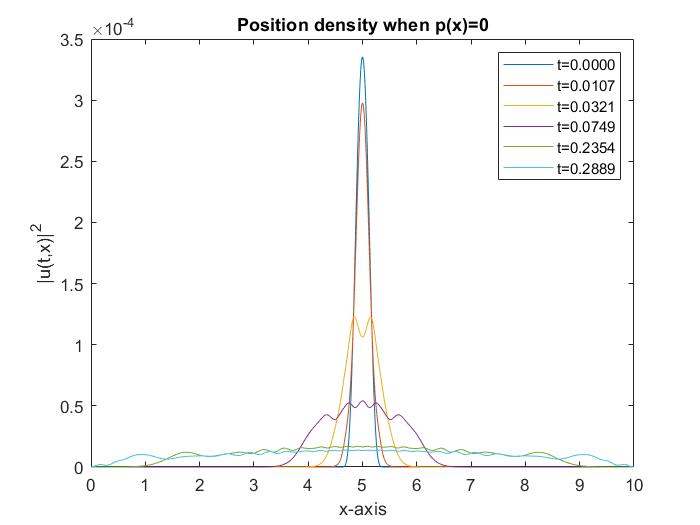}}
\end{minipage}
\hfill
\begin{minipage}[h]{0.31\linewidth}
\center{\includegraphics[scale=0.25]{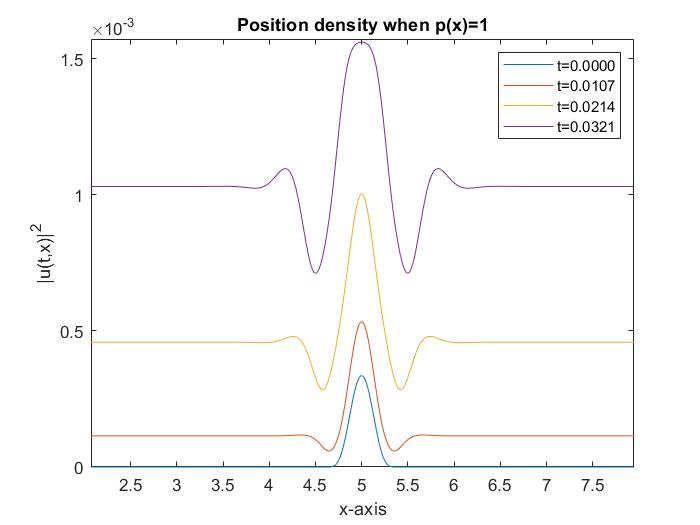}}
\end{minipage}
\hfill
\begin{minipage}[h]{0.31\linewidth}
\center{\includegraphics[scale=0.25]{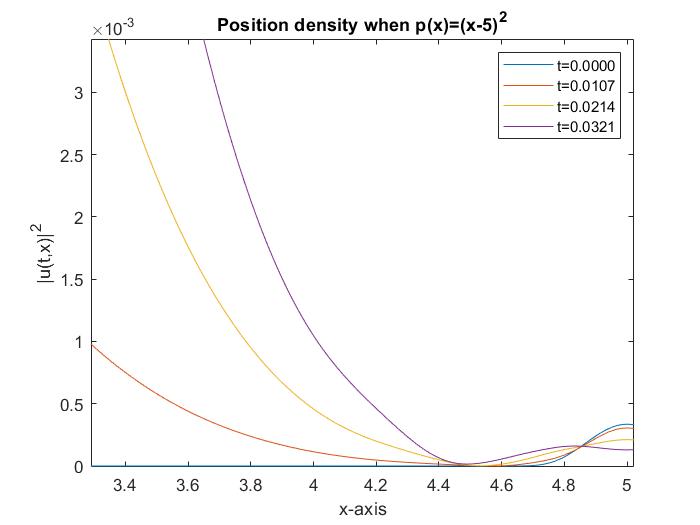}}
\end{minipage}
\caption{In these plots, we analyse the time evolution of the position density $|u|^{2}$ for different regular potentials. Here, the cases of the potentials with $p(x)=0, p(x)=1,$ and $p(x)=(x-5)^{2}$ are considered.} 
\label{fig2}
\end{figure}

\begin{figure}[ht!]
\begin{minipage}[h]{0.49\linewidth}
\includegraphics[scale=0.37]{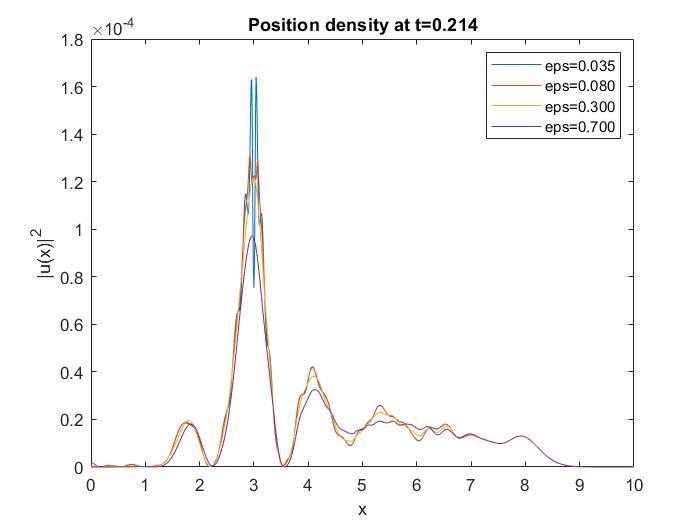}
\end{minipage}
\hfill
\begin{minipage}[h]{0.49\linewidth}
\includegraphics[scale=0.37]{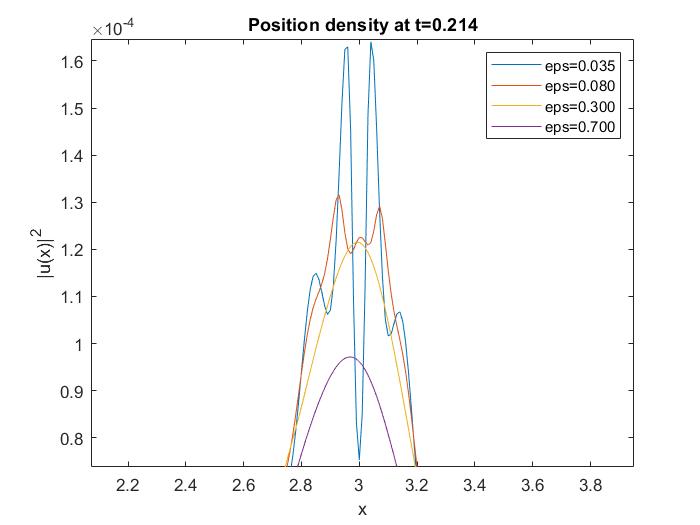}
\end{minipage}
\caption{In these plots, we analyse behaviour of the solution of the Schr\"{o}dinger equation \eqref{RE-01} with a $\delta$-like potential for different values of the parameter $\varepsilon$. Here, we compare the position density function of particles $|u|^{2}$ at $t=0.214$ for $\varepsilon=0.035, 0.080, 0.300, 0.800$. Here, the case of the potential with a $\delta$-like function behaviour with the support at point $3$ is considered.} 
\label{fig3}
\end{figure}

\begin{figure}[ht!]
\begin{minipage}[h]{0.49\linewidth}
\includegraphics[scale=0.37]{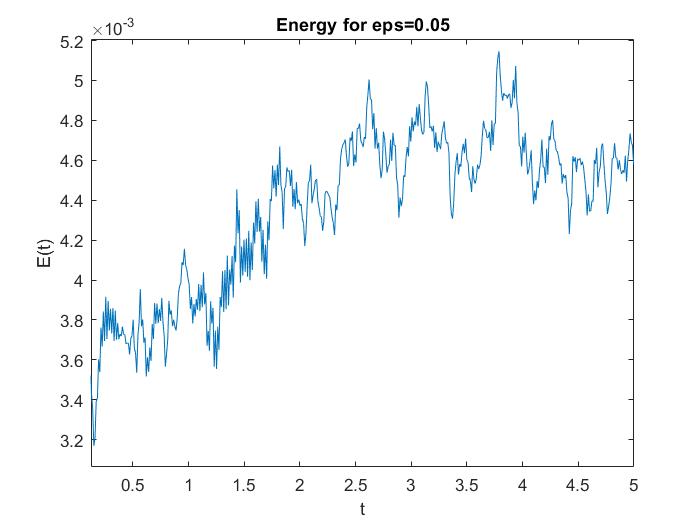}
\end{minipage}
\hfill
\begin{minipage}[h]{0.49\linewidth}
\includegraphics[scale=0.37]{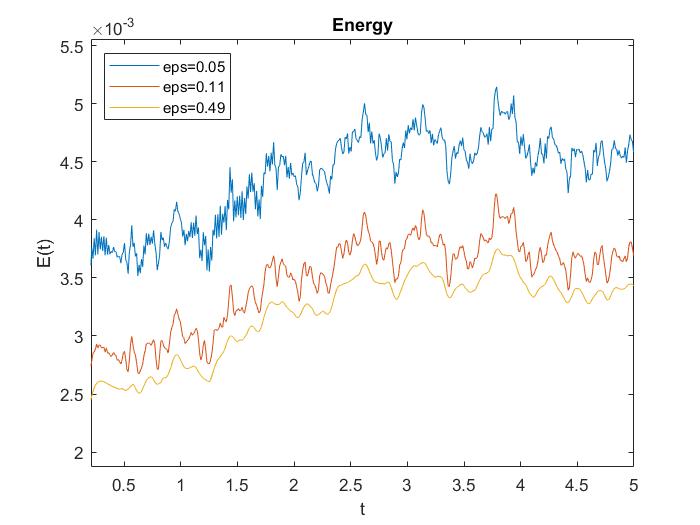}
\end{minipage}
\caption{In these plots, we compare the energy function $E(t)$ of the Schr\"{o}dinger equation \eqref{RE-01} corresponding to the $\delta$-potential case for $\varepsilon=0.05, 0.11, 0.49$.} 
\label{fig4}
\end{figure}

\begin{figure}[ht!]
\begin{minipage}[h]{0.49\linewidth}
\includegraphics[scale=0.37]{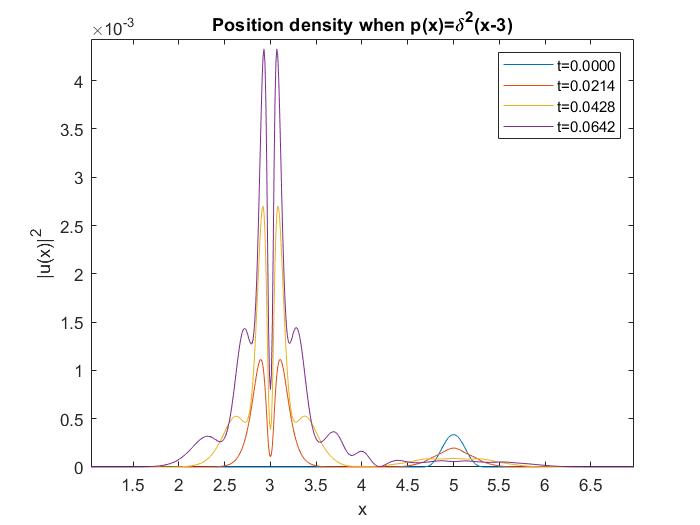}
\end{minipage}
\hfill
\begin{minipage}[h]{0.49\linewidth}
\includegraphics[scale=0.37]{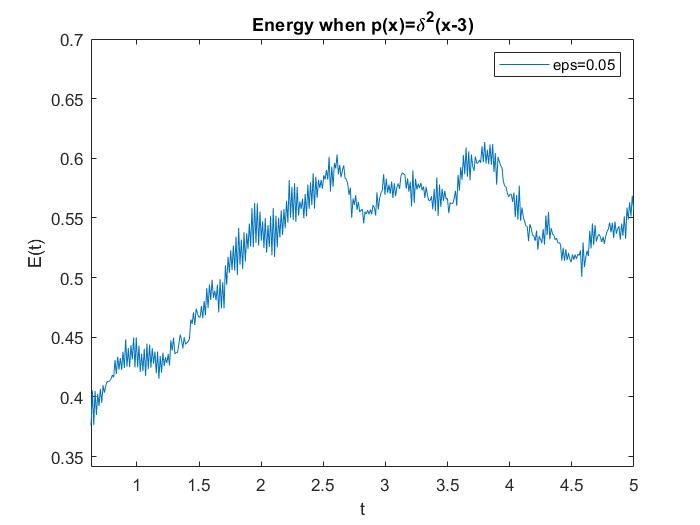}
\end{minipage}
\begin{minipage}[h]{0.49\linewidth}
\includegraphics[scale=0.37]{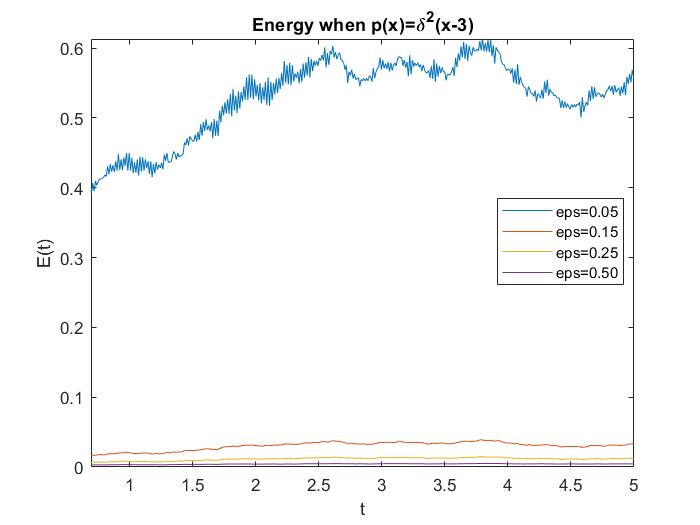}
\end{minipage}
\hfill
\begin{minipage}[h]{0.49\linewidth}
\includegraphics[scale=0.37]{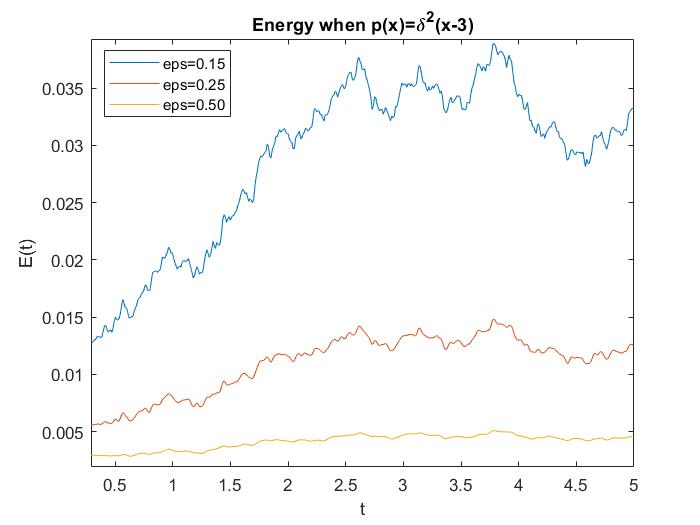}
\end{minipage}
\caption{In these plots, we analyse the solution of the Schr\"{o}dinger equation \eqref{RE-01} with a $\delta^{2}$-like potential. In the top left plot, we study the position density function $|u(t, x)|^{2}$ at $t=0.0000, 0.0214, 0.0428, 0.0642$ for $\varepsilon=0.05$. In further plots, we compare the energy function $E(t)$ of the Schr\"{o}dinger equation \eqref{RE-01} corresponding to the $\delta^{2}$-potential case for $\varepsilon=0.05, 0.15, 0.25, 0.50$. In the right-bottom plot, we compare the energy function for $\varepsilon=0.15, 0.25, 0.50$.} 
\label{fig5}
\end{figure}

\section{Numerical experiments}

In this Section, we do some numerical experiments. Let us analyse our problem by regularising a distributional potential $p(x)$ by a parameter $\varepsilon$. We define
$
p_\varepsilon (x):=(p\ast\varphi_\varepsilon)(x),
$
as the convolution with the mollifier
$\varphi_\varepsilon(x)=\frac{1}{\varepsilon} \varphi(x/\varepsilon),$
where
$$
\varphi(x)=
\begin{cases}
c \exp{\left(\frac{1}{x^{2}-1}\right)}, |x| < 1, \\
0, \,\,\,\,\,\,\,\,\,\,\,\,\,\,\,\,\,\,\,\,\,\,\,\,\,\,\,\,  |x|\geq 1,
\end{cases}
$$
with  $c \simeq 2.2523$ to have 
$
\int\limits_{-\infty}^{\infty}  \varphi(x)dx=1.
$
Then, instead of \eqref{Equation} we consider the regularised Cauchy problem for the 1D Schr\"{o}dinger equation
\begin{equation}\label{RE-01}
i\partial_{t}u_{\varepsilon}(t,x)-\partial^{2}_{x} u_{\varepsilon}(t,x)+ p_{\varepsilon}(x) u_{\varepsilon}(t,x) =0, \; (t,x)\in(0,T)\times\mathbb R,
\end{equation}
with the initial data $u_\varepsilon(0,x)=u_0 (x)$, for all $x\in\mathbb R.$ Here, we put
\begin{equation*}
\label{u_0}
u_0 (x)=
\begin{cases}
\exp{\left(\frac{1}{(x-5)^{2}-0.25}\right)}, \,\, |x-5| < 0.5, \\
0, \,\,\,\,\,\,\,\,\,\,\,\,\,\,\,\,\,\,\,\,\,\,\,\,\,\,\,\,\,\,\, \,\,\,\,\,\, \,\,\, |x-5| \geq 0.5.
\end{cases}
\end{equation*}
Note that ${\rm supp }\, u_0\subset[4.5, 5.5]$.

Here, we consider the following cases when potential is a regular function: $p(x)=0$, $p(x)=1$, and $p(x)=(x-5)^{2}$; when potential is a singular function: $p(x)=\frac{1}{30}\delta(x-3)$ with $p_{\varepsilon}(x)=\frac{1}{30}\varphi_\varepsilon(x-3)$ and $p(x)=\frac{1}{30}\delta^{2}(x-3)$ in the sense $p_{\varepsilon}(x)=\frac{1}{30}\varphi^{2}_\varepsilon(x-3)$, where $\delta$ denoting the standard Dirac's delta-distribution.

In Figure \ref{fig1}, we analyse behaviour of the solution of the Schr\"{o}dinger equation \eqref{RE-01} with a $\delta$-like potential. In the top left plot, the graphic of the position density of particles at the initial time is given. In the further plots, we draw the position density function $|u|^{2}$ at $t=0.0428, 0.1070, 0.1391, 0.2140, 0.2996$ for $\varepsilon=0.05$. Here, a $\delta$-like function with the support at point $3$ is considered.
We observe that a delta-function potential causing an accumulating of particles phenomena in the place of the support of the singularity.

In Figure \ref{fig2}, we analyse the time evolution of the position density for different regular potentials. Here, the cases of the potentials with $p(x)=0, p(x)=1,$ and $p(x)=(x-5)^{2}$ are considered.

In Figure \ref{fig3}, we analyse behaviour of the solution of the Schr\"{o}dinger equation \eqref{RE-01} with a $\delta$-like potential for different values of the parameter $\varepsilon$. Here, we compare the position density function of particles $|u|^{2}$ at $t=0.214$ for $\varepsilon=0.035, 0.080, 0.300, 0.800$. Here, the case of the potential with a $\delta$-like function behaviour with the support at point $3$ is considered. Here, we can see that the numerical simulations of the regularised equation \eqref{RE-01} are stable under the changing of the values of the parameter $\varepsilon$.

In Figure \ref{fig4}, we compare the energy function 
\begin{equation}
\label{Energy}
    E(t)=\Vert \nabla u(t,\cdot)\Vert_{L^2}^{2} + \Vert p^{\frac{1}{2}}(\cdot)u(t,\cdot)\Vert_{L^2}^{2}.
\end{equation}
of the Schr\"{o}dinger equation \eqref{RE-01} corresponding to the $\delta$-potential case for different values of the parameter $\varepsilon$. Simulations show that $E(t)\approx E(0)$ for $t>0$.

In Figure \ref{fig5}, we analyse the solution of the Schr\"{o}dinger equation \eqref{RE-01} with a $\delta^{2}$-like potential. In the left plot, we study the position density function $|u(t, x)|^{2}$ at $t=0.0000, 0.0214, 0.0428, 0.0642$ for $\varepsilon=0.05$. In the right plot, we compare the energy function $E(t)$ of the Schr\"{o}dinger equation \eqref{RE-01} corresponding to the $\delta^{2}$-potential case for $\varepsilon=0.05, 0.15, 0.25, 0.50$. From these plots we conclude that thanks to the concept of the very weak solution the studying of the processes in physics are possible despite the impossibility of the multiplication of the distributions in the theory of distributions.

\begin{rem}
By analysing these cases, from Figures \ref{fig4} and \ref{fig5} we see that the energy function $E(t)$ given by \eqref{Energy} satisfies $E(t)\approx E(0)$ for $t>0$. Moreover, it is observed that $E(t)$ depends on $\varepsilon$ by confirming the theory, that is, $E(t)=E_{\varepsilon}(t)$. From the bottom plots of Figure \ref{fig5} we observe that the energy $E(t)$ of the Schr\"{o}dinger equation \eqref{RE-01} with a  $\delta^{2}$-like potential corresponding to the case $\varepsilon=0.5$ is increased around $200$ times as $\varepsilon$ is decreased $10$ times by justifying the theoretical part.
\end{rem}

\begin{rem}
From the behaviours of the density function $|u(t, x)|^{2}$ of the Schr\"{o}dinger equation \eqref{RE-01} corresponding to the cases of $\delta$-like and $\delta^{2}$-like potentials, namely, from the left plot of Figure \ref{fig3} and the upper--left plot of Figure \ref{fig5} we observe a "splitting of the strong singularity" effect. Explanation of this phenomena is still an open question from the theoretical point of view.
\end{rem}

A second order in time and in space Crank-Nicolson scheme is used for the numerical analysis of the equation \eqref{RE-01}. All numerical computations are made in C++ by using the sweep method. In above numerical simulations, we use the Matlab R2018b. For all simulations we take $\Delta t=0.0107$, $\Delta x=0.01.$

\subsection{Conclusion} The theoretical and numerical analysis conducted in this paper showed that numerical methods work well in situations where a rigorous mathematical formulation of the problem is difficult in the framework of the classical theory of distributions. The ideology of very weak solutions eliminates this difficulty in the case of the terms with multiplication of distributions. In particular, in the case of the Schr\"{o}dinger equation, we see that a delta-function potential causing an effect of accumulating particles in the place of the support of the singularity.

Numerical simulations have shown that the idea of very weak solutions suit nice to numerical modelling. Moreover, using the theory of very weak solutions, we are able to deal with the uniqueness of numerical solutions of partial differential equations with coefficients of higher order singularity in some appropriate sense.

\section*{Acknowledgments}
The authors were supported in parts by the FWO Odysseus 1 grant G.0H94.18N: Analysis and Partial Differential Equations. Michael Ruzhansky was supported in parts by the EPSRC Grant EP/R003025/2. Arshyn Altybay was supported in parts by the MESRK Grant AP08052028 of the Science Committee of the Ministry of Education and Science of the Republic of Kazakhstan. Mohammed Sebih was supported by the Algerian Scholarship P.N.E. 2018/2019 during his visit to the University of Stuttgart and Ghent University. Also, Mohammed Sebih thanks Professor Jens Wirth and Professor Michael Ruzhansky for their warm hospitality.


\end{document}